\numberwithin{equation}{section}
\newtheorem{theorem}{Theorem}[section]
\newtheorem{lemma}[theorem]{Lemma}
\newtheorem{corollary}[theorem]{Corollary}
\theoremstyle{definition}
\def\Z{\ensuremath{\mathbb{Z}}}
\def\Q{\ensuremath{\mathbb{Q}}}
\def\R{\ensuremath{\mathbb{R}}}
\def\C{\ensuremath{\mathbb{C}}}
\def\H{\ensuremath{\mathbb{H}}}
\def\A{\ensuremath{\mathbb{A}}}
\newcommand{\pa}[1]{\left(#1\right)}
\newcommand{\cpa}[1]{\left\{#1\right\}}
\newcommand{\tn}[1]{\textnormal{#1}}
\newcommand{\br}[1]{\left[#1\right]}
\newcommand{\tri}[3]{\textnormal{#1--#2--#3}}
\begin{document}
\title{Rational Angled Hyperbolic Polygons}

\author[J.~Calcut]{Jack S. Calcut}
\address{Department of Mathematics\\
         Oberlin College\\
         Oberlin, OH 44074}
\email{jcalcut@oberlin.edu}

\keywords{Hyperbolic, triangle, quadrilateral, polygon, rational, angle, transcendental, Lindemann's theorem, spherical, Gaussian curvature.}
\subjclass[2010]{Primary 51M10; Secondary 11J81, 11J72}
\date{\today}

\begin{abstract}
We prove that every rational angled hyperbolic triangle has transcendental side lengths and that
every rational angled hyperbolic quadrilateral has at least one transcendental side length.
Thus, there does not exist a rational angled hyperbolic triangle or quadrilateral with algebraic side lengths.
We conjecture that there does not exist a rational angled hyperbolic polygon with algebraic side lengths.
\end{abstract}

\maketitle

\section{Introduction}\label{introduction}

Herein, an angle is \emph{rational} provided its radian measure $\theta$ is a rational multiple of $\pi$, written $\theta\in\Q\pi$.
Rational angles may also be characterized as angles having rational degree measure and as angles commensurable with a straight angle.
\emph{Algebraic} means algebraic over $\Q$.
Let $\A\subset\C$ denote the subfield of algebraic numbers.
If $\theta\in\Q\pi$, then $\cos\theta\in\A$ and $\sin\theta\in\A$.\\

In Euclidean geometry, there exists a rich collection of rational angled polygons with algebraic side lengths.
Besides equilateral triangles, one has the triangles in the Ailles rectangle~\cite{ailles} and in the golden triangle (see Figure~\ref{rectangle}).
\begin{figure}[h!]
	\centerline{\includegraphics[scale=1.0]{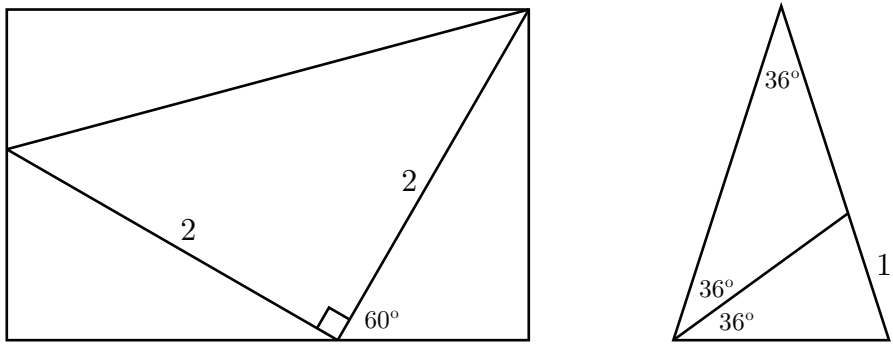}}
	\caption{Ailles rectangle and the golden triangle.}
	\label{rectangle}
\end{figure}
Further, let $\sigma(d)$ denote the number of similarity types of rational angled Euclidean triangles containing a representative with side lengths each of degree at most $d$ over $\Q$.
Then:
\begin{enumerate}
\item $\sigma(d)<\infty$ for each $d\in\Z^+$. A proof of this fact uses three ingredients:
(i) the Euclidean law of cosines,
(ii) if $\varphi$ is Euler's totient function, $n>2$, and $k$ is coprime to $n$, then the degree over $\Q$ of $\cos(2k\pi/n)$ equals $\varphi(n)/2$, and
(iii) $\varphi(n)\geq\sqrt{n/2}$ for each $n\in\Z^+$ (see~\cite{gst}).
\item $\sigma(1)=1$. This class is represented by an equilateral triangle (see~\cite[pp.~228--229]{CG06} or~\cite[p.~674]{gst}).
While Pythagorean triple triangles have integer side lengths, they have irrational acute angles;
a simple approach to this result uses unique factorization of Gaussian integers~\cite{gaussian}.
In fact, the measures of the acute angles in a Pythagorean triple triangle are transcendental in radians and degrees~\cite{gst}.
\item $\sigma(2)=14$. Parnami, Agrawal, and Rajwade~\cite{par} proved this result using Galois theory.
The $14$ similarity types are listed in Table~\ref{gsts.table}.
It is an exercise to construct representatives for these $14$ classes using the triangles above in Figure~\ref{rectangle}.
\begin{table}[h!]\renewcommand{\arraystretch}{1.4}\captionsetup{belowskip=10pt}
\begin{center}
	\begin{tabular}{|c|c|c|c|}
	\hline
		\tri{60}{60}{60}	&	\tri{45}{45}{90}	&	\tri{30}{60}{90}	&	\tri{15}{75}{90}	\\	\hline
		\tri{30}{30}{120}	&	\tri{30}{75}{75}	&	\tri{15}{15}{150}	&	\tri{30}{45}{105}	\\	\hline
		\tri{45}{60}{75}	&	\tri{15}{45}{120}	&	\tri{15}{60}{105}	&	\tri{15}{30}{135}	\\	\hline
		\tri{36}{36}{108}	&	\tri{36}{72}{72} 																					\\	\cline{1-2}
	\end{tabular}
	\caption{The $14$ similarity types (in degrees) of rational angled Euclidean triangles with side lengths each of degree at most two over $\Q$.}
	\label{gsts.table}
\end{center}
\end{table}
\end{enumerate}
A \emph{polar rational polygon} is a convex rational angled Euclidean polygon with integral length sides.
The study of such polygons was initiated by Schoenberg~\cite{schoenberg} and Mann~\cite{mann} and continues with the recent work of Lam and Leung~\cite{ll}.\\

In sharp contrast, there seem to be no well-known rational angled hyperbolic polygons with algebraic side lengths.
Throughout, $\H^2$ denotes the hyperbolic plane of constant Gaussian curvature $K=-1$.
We adopt Poincar\'{e}'s conformal disk model of $\H^2$ for our figures.\\

Below, we prove that every rational angled hyperbolic triangle has transcendental side lengths and every
rational angled hyperbolic quadrilateral (simple, but not necessarily convex) has at least one transcendental side length.
Our basic tactic is to obtain relations using hyperbolic trigonometry to which we may apply the following theorem of Lindemann.

\begin{theorem}[Lindemann~{\cite[p.~117]{niven_1956}}]\label{lindemann}
If $\alpha_1,\alpha_2,\ldots,\alpha_n\in\A$ are pairwise distinct, then $e^{\alpha_1},e^{\alpha_2},\ldots,e^{\alpha_n}$ are linearly independent over $\A$.
\end{theorem}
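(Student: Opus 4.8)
This is a classical theorem (the Lindemann--Weierstrass theorem), and the plan is to reconstruct Hermite's analytic method: from a hypothetical linear relation I would manufacture a nonzero rational integer whose absolute value is simultaneously forced to be less than $1$. Suppose, contrary to the claim, that $\sum_{i=1}^{n}\beta_i e^{\alpha_i}=0$ for some $\beta_i\in\A$ not all zero. Discarding vanishing terms, I may assume each $\beta_i\neq 0$, and after multiplying through by $e^{-\alpha_n}$ I may assume that one of the exponents is $0$. The first step is to trade the algebraic coefficients for rational integers. Fixing a number field $K$ that is Galois over $\Q$ and contains every $\alpha_i$ and $\beta_i$, I would form the product $\prod_{\sigma}\pa{\sum_{i}\sigma(\beta_i)\,e^{\sigma(\alpha_i)}}$ over the embeddings $\sigma\colon K\hookrightarrow\C$. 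Each factor is a well-defined complex number, and the factor indexed by the identity is the assumed relation, so the product vanishes. Expanding and collecting terms with a common exponent produces a relation $\sum_{k}c_k e^{\gamma_k}=0$ whose exponents $\gamma_k$ are distinct algebraic numbers forming a union of complete sets of conjugates and whose coefficients $c_k$ are invariant under $\tn{Gal}(K/\Q)$, hence rational; clearing denominators makes them rational integers. Nontriviality of this relation I would secure by ordering $\C$ lexicographically via $(\tn{Re},\tn{Im})$---an ordering compatible with addition---and observing that the unique lexicographically largest exponent arising in the expansion cannot be cancelled.

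Then I would fix a large prime $p$ and introduce an auxiliary polynomial of the shape $f(x)=\ell^{\,rp}\,x^{p-1}\prod_{\gamma_k\neq 0}(x-\gamma_k)^{p}/(p-1)!$, where $\ell$ is a positive integer clearing the denominators of the $\gamma_k$ and $r$ is the number of exponents. Hermite's identity gives $I(t):=\int_0^t e^{\,t-u}f(u)\,du=e^{t}F(0)-F(t)$ with $F=\sum_{j\geq 0}f^{(j)}$. Forming $J:=\sum_{k}c_k\,I(\gamma_k)$ and using $\sum_k c_k e^{\gamma_k}=0$ to annihilate the $F(0)$ contribution, I obtain $J=-\sum_{k}c_k F(\gamma_k)$. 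The polynomial $f$ is engineered so that every derivative $f^{(j)}(\gamma_k)$ is an algebraic integer divisible by $p$, with the single exception of $f^{(p-1)}(0)=\ell^{\,rp}\prod_{\gamma_k\neq 0}(-\gamma_k)^{p}$; since the nonzero exponents fill complete conjugacy classes and the $c_k$ are rational, the symmetric structure forces $J$ to be a rational integer.

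Finally I would play the arithmetic and analytic estimates against one another. The elementary bound $\card{I(\gamma_k)}\leq \card{\gamma_k}\,e^{\card{\gamma_k}}\max_{[0,\gamma_k]}\card{f}$, combined with the factorial denominator of $f$, yields $\card{J}\leq C^{p}/(p-1)!$ for a constant $C$ independent of $p$, so $\card{J}<1$ once $p$ is large. On the other hand, taking $p$ larger than $\card{c_0}$ and than the norms of the relevant algebraic integers keeps the distinguished term $c_0 f^{(p-1)}(0)$ from being divisible by $p$ while all remaining contributions are, so $J$ is a nonzero integer and $\card{J}\geq 1$---a contradiction. I expect the principal obstacle to be exactly this arithmetic bookkeeping: confirming that after the Galois symmetrization the coefficient $c_0$ of the exponent $0$ is nonzero, that $f^{(p-1)}(0)$ escapes divisibility by $p$, and that every other derivative is $p$-divisible, so that $J\in\Z\setminus\cpa{0}$. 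The analytic side is routine, and the availability of arbitrarily large primes $p$ then closes the argument.
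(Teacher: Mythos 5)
The paper does not actually prove this statement: it is quoted from Niven's book and used as a black box, so there is no internal proof to compare against. Your outline is the classical Hermite--Lindemann--Weierstrass argument, which is in the spirit of the cited source, and the analytic half (the integral identity, the bound $\card{J}\leq C^{p}/(p-1)!$, and the $p$-divisibility of the derivatives $f^{(j)}(\gamma_k)$ for $j\neq p-1$) is indeed routine.

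The genuine gap is precisely the one you flag at the end, and it is more than bookkeeping: with the single auxiliary polynomial $f(x)=\ell^{rp}x^{p-1}\prod_{\gamma_k\neq0}(x-\gamma_k)^{p}/(p-1)!$ the endgame hinges on the coefficient $c_0$ of the exponent $0$ in the symmetrized relation being a nonzero rational integer, and the naive product over embeddings does not deliver this. Concretely, take $K=\Q\pa{\sqrt{2}}$ with exponents $\sqrt{2},-\sqrt{2},0$: the choices contributing to the exponent $0$ in the expansion of $\prod_{\sigma}\pa{\sum_i\sigma(\beta_i)e^{\sigma(\alpha_i)}}$ include not only the all-zero choice but also the constant choices on $\pm\sqrt{2}$, since $\sqrt{2}+\tau\pa{\sqrt{2}}=0$; hence $c_0=N(\beta_1)+N(\beta_2)+N(\beta_3)$, which vanishes for $\beta_1=\sqrt{2}$, $\beta_2=\beta_3=1$. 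Your lexicographic argument shows only that \emph{some} $c_k$ survives (typically the one attached to the largest exponent), and says nothing about the term at $0$, so the reduction as written can hand you a relation from which the contradiction cannot be extracted. The standard repairs restructure the argument: either Niven's more careful two-stage reduction, or Baker's device of one auxiliary polynomial $f_i$ per exponent together with the product $J_1\cdots J_n$, whose nonvanishing modulo $p$ comes from $f_i^{(p-1)}(\alpha_i)$ rather than from a distinguished constant term. A second, smaller inaccuracy: the symmetrized coefficients are not individually rational but conjugate-equivariant, $\tau(c_\gamma)=c_{\tau(\gamma)}$; the rationality of $J$ then comes from summing over complete conjugate orbits, which works but must be argued.
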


We conjecture that there does not exist a rational angled hyperbolic polygon with algebraic side lengths.
We ask whether a rational angled hyperbolic polygon can have any algebraic side lengths whatsoever.\\

Throughout, polygons are assumed to be simple, nondegenerate (meaning consecutive triples of vertices are noncollinear), and not necessarily convex.
In particular, each internal angle of a polygon has radian measure in $(0,\pi)\cup(\pi,2\pi)$.
Our closing section discusses generalized hyperbolic triangles with one ideal vertex, spherical triangles, and the dependency of our results on Gaussian curvature.

\section{Hyperbolic Triangles}

Consider a triangle $ ABC \subset \H^2$ as in Figure~\ref{hyp_triangle}.
\begin{figure}[htbp!]
    \centerline{\includegraphics[scale=1.0]{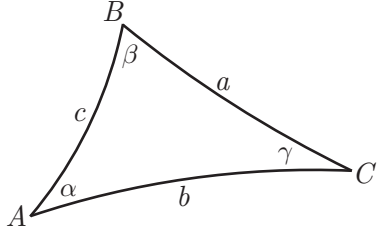}}
    \caption{Hyperbolic triangle $ABC$.}
\label{hyp_triangle}
\end{figure}
Recall the following hyperbolic trigonometric identities (see Ratcliffe~\cite[p.~82]{ratcliffe_2006}) known respectively as
the hyperbolic law of sines, the first hyperbolic law of cosines, and the second hyperbolic law of cosines:
\begin{equation}\label{hlos}
\frac{\sinh a}{\sin\alpha}=\frac{\sinh b}{\sin\beta}=\frac{\sinh c}{\sin\gamma}, \tag{HLOS}
\end{equation}
\begin{equation}\label{hloc1}
\cos\gamma=\frac{\cosh a \cosh b -\cosh c}{\sinh a \sinh b}, \tn{ and} \tag{HLOC1}
\end{equation}
\begin{equation}\label{hloc2}
\cosh c =\frac{\cos \alpha \cos \beta +\cos\gamma}{\sin\alpha \sin\beta}. \tag{HLOC2}
\end{equation}

\begin{lemma}\label{3anglesrational}
Let $ABC \subset \H^2$ be a rational angled triangle labelled as in Figure~\ref{hyp_triangle}. Then, every side length of $ABC$ is transcendental.
\end{lemma}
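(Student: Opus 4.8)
The plan is to use the second hyperbolic law of cosines \eqref{hloc2} to show that the hyperbolic cosine of each side is algebraic, and then to invoke Lindemann's theorem (Theorem~\ref{lindemann}) to force each side itself to be transcendental.

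First I would observe that since $\alpha,\beta,\gamma\in\Q\pi$ we have $\cos\alpha,\cos\beta,\cos\gamma,\sin\alpha,\sin\beta,\sin\gamma\in\A$, as noted in the introduction. Moreover, the internal angles of a genuine triangle lie in $(0,\pi)$, so the sines are all nonzero, and the right-hand side of \eqref{hloc2} is a well-defined algebraic number. Hence $\cosh c\in\A$. Cyclically permuting the labels in \eqref{hloc2} yields the analogous expressions for $\cosh a$ and $\cosh b$, so in fact $\cosh a,\cosh b,\cosh c\in\A$.

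Next I would fix one side, say $c$, and suppose toward a contradiction that $c\in\A$. Because the triangle is nondegenerate, $c>0$; in particular $c\neq 0$, so $c$, $-c$, and $0$ are pairwise distinct algebraic numbers. By Theorem~\ref{lindemann}, the values $e^{c},e^{-c},e^{0}=1$ are linearly independent over $\A$. On the other hand, writing $q=\cosh c=\pa{e^{c}+e^{-c}}/2\in\A$ produces the relation
\begin{equation*}
1\cdot e^{c}+1\cdot e^{-c}+\pa{-2q}\cdot 1=0,
\end{equation*}
which is a nontrivial $\A$-linear dependence among $e^{c},e^{-c},1$, since the coefficient of $e^{c}$ equals $1\neq 0$. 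This contradicts their linear independence, so $c$ must be transcendental. Applying the identical argument to $a$ and $b$ then finishes the proof.

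I expect that the real content lies in the setup rather than in any single difficult step: the substance is recognizing that \eqref{hloc2} expresses $\cosh c$ purely in terms of the (algebraic) trigonometric values of the rational angles, and that a putative algebraic value of $c$ yields exactly a three-term $\A$-linear relation of the form to which Lindemann's theorem applies. The only points needing care are verifying that the denominators $\sin\alpha\sin\beta$ are nonzero and that $c\neq 0$, both of which follow from nondegeneracy and the angle constraints for an honest hyperbolic triangle.
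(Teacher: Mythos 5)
Your proposal is correct and follows the paper's own argument essentially verbatim: both use \eqref{hloc2} to deduce $\cosh c\in\A$ and then apply Lindemann's theorem to the three-term relation among $e^{c}$, $e^{-c}$, and $e^{0}$ to conclude $c\notin\A$. The extra care you take with nonvanishing denominators and the distinctness of $c,-c,0$ is sound and only makes explicit what the paper leaves implicit.
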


\begin{proof}
Define $x:=\cosh c > 0$. Then:
\begin{equation}\label{xcoshc}
	x e^0 = \frac{1}{2}e^c + \frac{1}{2}e^{-c}.
\end{equation}
As $\alpha,\beta,\gamma\in\Q\pi$ and $\A$ is a field, \ref{hloc2} implies that $x\in\A$.
As $c>0$, Lindemann's theorem applied to \eqref{xcoshc} implies that $c\not\in\A$.
The proofs for $a$ and $b$ are similar.
\end{proof}

\begin{corollary}
Each regular rational angled hyperbolic polygon has transcendental side length, radius, and apothem.
\end{corollary}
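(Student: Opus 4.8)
The plan is to reduce the corollary directly to Lemma~\ref{3anglesrational} by decomposing the regular polygon into congruent rational angled right triangles. Let $P\subset\H^2$ be a regular rational angled $n$-gon with center $O$, internal angle $\theta\in\Q\pi$, side length $s$, radius $R$ (the distance from $O$ to a vertex), and apothem $p$ (the distance from $O$ to the midpoint of a side). First I would form the triangle $OMV$, where $V$ is a vertex of $P$ and $M$ is the midpoint of a side incident to $V$.

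Next I would identify the three angles of $OMV$. By the rotational symmetry of $P$, the $n$ radii from $O$ to the vertices cut the full angle at $O$ into $n$ equal sectors, and the apothem $OM$ bisects one of these sectors, so the angle at $O$ equals $\pi/n$. Again by symmetry the radius $OV$ bisects the internal angle at $V$, so the angle at $V$ equals $\theta/2$. Since the apothem meets the side perpendicularly at its midpoint, the angle at $M$ equals $\pi/2$. As $\theta\in\Q\pi$, each of $\pi/n$, $\theta/2$, and $\pi/2$ lies in $\Q\pi$, so $OMV$ is a rational angled hyperbolic triangle. One should also check that $OMV$ is a genuine nondegenerate triangle: by the Gauss--Bonnet angle-sum inequality the internal angle of a regular hyperbolic $n$-gon satisfies $\theta<(n-2)\pi/n$, whence $\pi/n+\theta/2+\pi/2<\pi$.

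Then I would apply Lemma~\ref{3anglesrational} to $OMV$. The lengths of its sides $OM$, $OV$, and $MV$ are exactly $p$, $R$, and $s/2$, so all three are transcendental; in particular the apothem and the radius are transcendental. Finally, since $\A$ is a field and $s/2$ is transcendental, $s=2\cdot(s/2)$ cannot be algebraic, and therefore the side length $s$ is transcendental as well.

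I do not expect a serious obstacle here, as the whole argument is the remark that the natural subdivision of a regular polygon produces rational angled right triangles, after which Lemma~\ref{3anglesrational} applies verbatim. The only points deserving a word of care are the two symmetry claims---that $OM$ bisects the central sector and that $OV$ bisects the internal angle at $V$---and the nondegeneracy of $OMV$, all of which follow at once from regularity and the hyperbolic angle-sum inequality.
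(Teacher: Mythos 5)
Your proof is correct and takes essentially the same approach as the paper: the paper also subdivides the regular polygon into right triangles with angles $\theta/2$, $\pi/n$, $\pi/2$ (writing out the case $n=4$ explicitly) and applies Lemma~\ref{3anglesrational} to conclude that the half-side, radius, and apothem are transcendental. Your added checks of the symmetry claims, the nondegeneracy via the angle-sum inequality, and the step from $s/2$ to $s$ are all fine.
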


\begin{proof}
Consider the case of a quadrilateral $ABCD\subset\H^2$.
By an isometry of $\H^2$, we may assume $ABCD$ appears as in Figure~\ref{reg_hyp_quad} where $E$ is the origin and $F$ is the midpoint of segment $AB$.
 \begin{figure}[htbp!]
    \centerline{\includegraphics[scale=1.0]{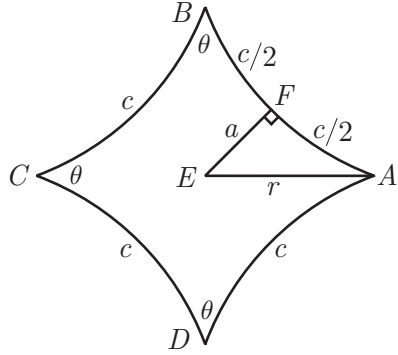}}
    \caption{Regular hyperbolic quadrilateral $ABCD$.}
\label{reg_hyp_quad}
\end{figure}
Triangle $AEF$ has radian angle measures $\theta/2$, $\pi/4$, and $\pi/2$.
Lemma~\ref{3anglesrational} implies that $c/2$, $r$, and $a$ are transcendental.
The cases $n\ne4$ are proved similarly.
\end{proof}

We will use the following lemma in the next section on quadrilaterals.

\begin{lemma}\label{alg_legs}
There does not exist a triangle $ ABC \subset \H^2$, labelled as in Figure~\ref{hyp_triangle}, such that $\gamma\in\Q\pi$, $\alpha+\beta\in\Q\pi$, and $a,b\in\A$.
\end{lemma}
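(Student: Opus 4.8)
The plan is to argue by contradiction. Assume such a triangle exists, so that $\gamma\in\Q\pi$, $\alpha+\beta\in\Q\pi$, and $a,b\in\A$, with $a,b>0$ and $0<\gamma<\pi$. Put $\lambda:=\cos\pa{\alpha+\beta}+\cos\gamma$; since $\gamma,\alpha+\beta\in\Q\pi$ and $\A$ is a field, $\lambda\in\A$. The strategy is to produce a single relation expressing $\lambda$ entirely in terms of the data $a,b,\gamma$, eliminating the \emph{a priori} uncontrolled quantities $c$, $\alpha$, and $\beta$ individually, and then to expose this relation as a nontrivial $\A$-linear dependence among exponentials $e^{\mu}$ with pairwise distinct algebraic exponents, contradicting Theorem~\ref{lindemann}.

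First I would combine the three hyperbolic laws. From \ref{hlos} one has $\sin\alpha=\sinh a\,\sin\gamma/\sinh c$ and $\sin\beta=\sinh b\,\sin\gamma/\sinh c$, so that $\sin\alpha\sin\beta=\sinh a\sinh b\,\sin^2\gamma/\sinh^2 c$. From \ref{hloc2} one has $\cos\alpha\cos\beta=\cosh c\,\sin\alpha\sin\beta-\cos\gamma$, whence, using $\sinh^2 c=\pa{\cosh c-1}\pa{\cosh c+1}$,
\begin{equation*}
\cos\pa{\alpha+\beta}+\cos\gamma=\pa{\cosh c-1}\sin\alpha\sin\beta=\frac{\sinh a\sinh b\,\sin^2\gamma}{\cosh c+1}.
\end{equation*}
Substituting $\cosh c+1=\cosh a\cosh b-\sinh a\sinh b\,\cos\gamma+1$ from \ref{hloc1} and clearing denominators then gives the key relation
\begin{equation*}
\lambda\,\cosh a\cosh b-\mu\,\sinh a\sinh b+\lambda=0,\qquad \mu:=\lambda\cos\gamma+\sin^2\gamma\in\A.
\end{equation*}

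Next I would pass to exponentials by way of $\cosh a\cosh b=\tfrac12\pa{\cosh\pa{a+b}+\cosh\pa{a-b}}$ and $\sinh a\sinh b=\tfrac12\pa{\cosh\pa{a+b}-\cosh\pa{a-b}}$, which recast the relation as
\begin{equation*}
\tfrac{\lambda-\mu}{2}\cosh\pa{a+b}+\tfrac{\lambda+\mu}{2}\cosh\pa{a-b}+\lambda=0.
\end{equation*}
When $a\ne b$, the five exponents $\pm\pa{a+b},\pm\pa{a-b},0$ are distinct algebraic numbers, so Theorem~\ref{lindemann} forces every $\A$-coefficient to vanish, giving $\lambda=\mu=0$. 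When $a=b$, the term $\cosh\pa{a-b}$ degenerates to $1$ and the three exponents $\pm 2a,0$ are distinct, and Lindemann again yields $\lambda=\mu=0$. In either case $\mu=\sin^2\gamma+\lambda\cos\gamma=\sin^2\gamma=0$, forcing $\sin\gamma=0$, which contradicts $0<\gamma<\pi$ and completes the proof.

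The main obstacle I anticipate is the derivation of the clean relation $\lambda\,\cosh a\cosh b-\mu\,\sinh a\sinh b+\lambda=0$: the crux is that, although $c$, $\alpha$, and $\beta$ are not individually determined by the rationality hypotheses, the \emph{symmetric} combination $\cos\pa{\alpha+\beta}+\cos\gamma$ collapses—after invoking all three hyperbolic laws—to a ratio whose numerator and denominator depend only on $a,b,\gamma$, so that $c$ is eliminated. A secondary and entirely routine subtlety is the coincidence of exponents in the isosceles case $a=b$, which forces the brief case split above but presents no genuine difficulty.
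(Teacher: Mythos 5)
Your proof is correct, but it takes a genuinely different and noticeably cleaner route than the paper's. The paper computes $\cos\alpha$ and $\cos\beta$ separately via \ref{hloc1} applied at the vertices $A$ and $B$, uses \ref{hlos} for $\sin\alpha\sin\beta$, substitutes $\cosh c=\cosh a\cosh b-\cos\gamma\,\sinh a\sinh b$, and then expands everything into exponentials to obtain a $25$-term relation $\sum p\,e^{ma+nb}=0$ with $p\in\Z[x,y,z]$; Lindemann is applied only to the strictly dominant term, whose coefficient $\pa{x-1}\pa{z-1}^2$ must vanish, contradicting $x\ne1$ and $z\ne1$. You instead observe that the symmetric combination $\cos\pa{\alpha+\beta}+\cos\gamma$ collapses via \ref{hloc2} and \ref{hlos} to $\pa{\cosh c-1}\sin\alpha\sin\beta=\sinh a\sinh b\,\sin^2\gamma/\pa{\cosh c+1}$, and after one substitution of \ref{hloc1} you land on a five-term exponential relation whose coefficients Lindemann forces to vanish entirely, yielding $\sin\gamma=0$ directly. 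I verified your key identity $\lambda\cosh a\cosh b-\mu\sinh a\sinh b+\lambda=0$ and the distinctness of the exponents $\pm\pa{a+b},\pm\pa{a-b},0$ (your case split at $a=b$ is exactly the right precaution, and is a wrinkle the paper's argument avoids only because its dominant exponent $3a+3b$ is automatically strict). What your approach buys is a hand-checkable computation with no computer algebra and a stronger conclusion (all coefficients vanish, not just the leading one); what the paper's buys is a template that scales to the quadrilateral case, where the analogous brute-force expansion into $1041$ terms is unavoidable. Both arguments rest on the same engine, Theorem~\ref{lindemann}.
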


\begin{proof}
Suppose, by way of contradiction, that such a triangle exists.
Define:
\begin{align}\label{xyzZ}
	x:=&\cos\pa{\alpha+\beta}\in(-1,1)\cap\A, \nonumber \\
	y:=&\sin\gamma\in(0,1]\cap\A, \\
	z:=&\cos\gamma\in(-1,1)\cap\A, \tn{ and} \nonumber \\
	Z:=&\cosh c >0. \nonumber
\end{align}
Then, \ref{hloc1} and \ref{hlos} yield:
\begin{multline*}
x=\cos\pa{\alpha+\beta} = \cos\alpha \cos\beta-\sin\alpha \sin\beta \\
 =\frac{Z\cosh b -\cosh a}{\sinh b \sinh c} \cdot \frac{Z\cosh a -\cosh b}{\sinh a \sinh c}- y^2 \frac{\sinh a \sinh b}{\sinh^2 c}.
\end{multline*}
As $\sinh^2 c=Z^2-1$, we obtain:
\begin{multline}\label{den_cleared}
x\pa{Z^2-1} \sinh a \sinh b  \\
= \pa{Z\cosh b -\cosh a}\pa{Z\cosh a -\cosh b}-y^2\sinh^2 a \sinh^2 b.
\end{multline}
By \ref{hloc1}:
\begin{equation}\label{Z_sub}
Z=\cosh a \cosh b -z \sinh a \sinh b.
\end{equation}
Make the substitution~\eqref{Z_sub} in \eqref{den_cleared} and then, for $t=a$ and $t=b$, expand hyperbolics into exponentials using the standard identities:
\[
\cosh t = \frac{1}{2}e^t + \frac{1}{2}e^{-t} \quad \tn{and} \quad \sinh t =\frac{1}{2}e^t - \frac{1}{2}e^{-t}.
\]
Expand out the resulting equation by multiplication, multiply through by $64$, and then subtract the right hand terms to the left side.
Now, collect together terms whose exponentials have identical exponents as elements of $\Z\br{a,b}$.
One obtains:
\begin{equation}\label{rel_R}
	\pa{x-1}\pa{z-1}^2 e^{3a+3b}+\pa{\tn{sum of lower order terms}}=0.
\end{equation}
Equation \eqref{rel_R} is our relation.
Its left hand side, denoted by $R$, is the sum of $25$ terms each of the form $p e^{ma+nb}$ for some $p\in\Z\br{x,y,z}$ and some integers $m$ and $n$ in $\br{-3,3}$.
By definition, \emph{lower order} terms of $R$ have $m<3$ or $n<3$ (or both).
As $a>0$ and $b>0$, Lindemann's theorem applied to \eqref{rel_R} implies that $x=1$ or $z=1$.
This contradicts \eqref{xyzZ} and completes the proof of Lemma~\ref{alg_legs}.
\end{proof}

\section{Hyperbolic Quadrilaterals}

Consider a quadrilateral $ABCD\subset\H^2$.
Whether or not $ABCD$ is convex, at least one diagonal $AC$ or $BD$ of $ABCD$ lies inside $ABCD$
(this holds in neutral geometry).
Relabelling the vertices of $ABCD$ if necessary, we assume $BD$ lies inside $ABCD$ and that $ABCD$ is labelled as in Figure~\ref{hyp_quad}.
\begin{figure}[htbp!]
    \centerline{\includegraphics[scale=1.0]{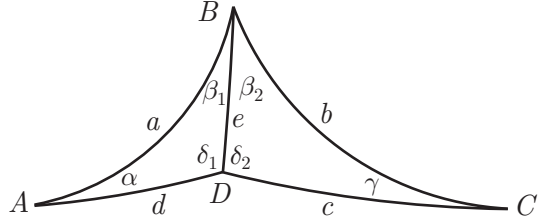}}
    \caption{Quadrilateral $ABCD\subset\H^2$ with internal diagonal $BD$.}
\label{hyp_quad}
\end{figure}
The radian measures of the internal angles of $ABCD$ at $B$ and $D$ are $\beta=\beta_1+\beta_2$ and $\delta=\delta_1+\delta_2$ respectively.

\begin{theorem}\label{quad_thm}
If quadrilateral $ABCD$ is rational angled, then at least one of its side lengths $a$, $b$, $c$, or $d$ is transcendental.
\end{theorem}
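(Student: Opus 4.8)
The plan is to argue by contradiction: assume that all four side lengths $a,b,c,d$ are algebraic and exploit the internal diagonal $BD$ to reduce to Lemma~\ref{alg_legs}. Write $\beta_1,\delta_1$ for the angles of triangle $ABD$ at $B,D$ and $\beta_2,\delta_2$ for the angles of triangle $CBD$ at $B,D$, so that $\beta=\beta_1+\beta_2$ and $\delta=\delta_1+\delta_2$, with $\alpha,\beta,\gamma,\delta\in\Q\pi$. Since $BD$ is a common side of the two triangles, applying \ref{hloc1} in each (with $BD$ the side opposite $\alpha$ and opposite $\gamma$ respectively) yields the single identity
\[
\cosh a\cosh d-\cos\alpha\,\sinh a\sinh d=\cosh b\cosh c-\cos\gamma\,\sinh b\sinh c,
\]
as both sides equal $\cosh(BD)$.

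First I would expand every hyperbolic function into exponentials using the standard identities. Because $a,b,c,d\in\A$ and $\cos\alpha,\cos\gamma\in\A$, this becomes an $\A$-linear relation among the eight exponentials $e^{\pm a\pm d}$ and $e^{\pm b\pm c}$, so Theorem~\ref{lindemann} applies. The coefficient of $e^{a+d}$ is $\tfrac14(1-\cos\alpha)$ and that of $e^{b+c}$ is $-\tfrac14(1-\cos\gamma)$; comparing the two largest exponents forces first $a+d=b+c$ and then $\cos\alpha=\cos\gamma$, while comparing the middle terms forces $|a-d|=|b-c|$. Hence $\{a,d\}=\{b,c\}$ as multisets, and $\cos\alpha=\cos\gamma$ gives $\alpha=\gamma$ since $\alpha,\gamma\in(0,\pi)$. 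The delicate point, and the main obstacle, is exactly this bookkeeping: I must rule out accidental collisions among the exponents $\pm a\pm d,\pm b\pm c$ that might let the extreme coefficients cancel without producing the stated equalities. Since $a,b,c,d>0$, the top exponents $a+d$ and $b+c$ cannot coincide with any of the remaining six, and $\tfrac14(1-\cos\alpha)$ never vanishes for $\alpha\in(0,\pi)$, so the matching is indeed forced.

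With $\{a,d\}=\{b,c\}$ and $\alpha=\gamma$, triangles $ABD$ and $CBD$ have the same two side lengths enclosing equal included angles, so they are congruent by SAS. Their angle multisets therefore agree, $\{\alpha,\beta_1,\delta_1\}=\{\gamma,\beta_2,\delta_2\}$, and deleting the common value $\alpha=\gamma$ leaves $\{\beta_1,\delta_1\}=\{\beta_2,\delta_2\}$; in particular $\beta_1+\delta_1=\beta_2+\delta_2$. Adding these equal quantities gives $2(\beta_1+\delta_1)=(\beta_1+\beta_2)+(\delta_1+\delta_2)=\beta+\delta$, so that $\beta_1+\delta_1=(\beta+\delta)/2\in\Q\pi$. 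Now triangle $ABD$ has $\alpha\in\Q\pi$ at $A$, the sum $\beta_1+\delta_1\in\Q\pi$ of its other two angles, and the two legs $a$ and $d$ adjacent to $\alpha$ both algebraic. This is precisely the configuration ruled out by Lemma~\ref{alg_legs}, giving the desired contradiction. I expect the congruence and the closing appeal to Lemma~\ref{alg_legs} to be routine once the Lindemann step is secured, and I note that convexity is never used: the diagonal $BD$ is internal by hypothesis, and \ref{hloc1} sees the angles $\alpha,\gamma$ only through their cosines.
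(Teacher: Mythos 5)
Your argument is correct in substance but takes a genuinely different, and considerably more economical, route than the paper's. The paper never writes down the identity you start from; it instead expands $\cos\beta=\cos(\beta_1+\beta_2)$ and $\cos\delta=\cos(\delta_1+\delta_2)$, obtains two quadratics in $E=\cosh e$, eliminates $E$ by squaring twice, and applies Lindemann's theorem to a $1041$-term relation (computer algebra recommended), with a case at the end that forces the whole argument to be repeated for the diagonal $AC$ before an SSS congruence hands the problem to Lemma~\ref{alg_legs}. You instead equate the two \ref{hloc1} expressions for $\cosh e$ coming from triangles $ABD$ and $CBD$, which is a single $8$-term exponential relation with algebraic exponents and coefficients; Lindemann's theorem then yields $\{a,d\}=\{b,c\}$ and $\alpha=\gamma$ directly, SAS gives $\beta_1+\delta_1=(\beta+\delta)/2\in\Q\pi$, and Lemma~\ref{alg_legs} finishes. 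This avoids the elimination, the computer algebra, and the second diagonal entirely; the endgame (congruent half-triangles feeding Lemma~\ref{alg_legs}) matches the spirit of the paper's Case~2.

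One justification needs repair. You assert that positivity of $a,b,c,d$ alone prevents the top exponents from colliding with the remaining six, but positivity does not exclude $a+d=b-c$ (take $a=4$, $d=5$, $b=10$, $c=1$). The correct reason is the triangle inequality in the two triangles sharing the diagonal $e$: $|b-c|<e<a+d$ and $|a-d|<e<b+c$, which rules out every collision except $a+d=b+c$ for the top terms and $|a-d|=|b-c|$ for the middle terms. With that substitution, and the observation (which you make implicitly) that $1\pm\cos\alpha\neq0$ because $\alpha$ and $\gamma$ are angles of the sub-triangles and hence lie in $(0,\pi)$ even when the quadrilateral is non-convex, your proof is complete.
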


\begin{proof}
Suppose, by way of contradiction, that $\alpha,\beta,\gamma,\delta\in\Q\pi$ and $a,b,c,d\in\A$.
Define:
\begin{align}\label{wxyzE}
	w:=&\cos\beta \in (-1,1) \cap\A, \nonumber \\
	x:=&\cos\delta \in (-1,1) \cap\A, \nonumber\\
	y:=&\sin\alpha \in [-1,1]\cap\A -\cpa{0},\\
	z:=&\sin\gamma \in [-1,1]\cap\A -\cpa{0}, \tn{ and} \nonumber\\
	E:=&\cosh e >0. \nonumber
\end{align}
Then, \ref{hloc1} and \ref{hlos} yield:
\begin{multline*}
w=\cos\beta=\cos\pa{\beta_1+\beta_2}=\cos\beta_1\cos\beta_2-\sin\beta_1\sin\beta_2\\
=\frac{E\cosh a -\cosh d}{\sinh a \sinh e} \cdot \frac{E\cosh b -\cosh c}{\sinh b \sinh e} - \frac{\sinh d}{\sinh e} \sin\alpha \frac{\sinh c}{\sinh e} \sin\gamma.
\end{multline*}
As $\sinh^2 e = E^2 -1$, we obtain:
\begin{multline}\label{raw_w}
	w\pa{E^2-1}\sinh a \sinh b \\
	=\pa{E\cosh a -\cosh d}\pa{E\cosh b -\cosh c}-yz\sinh a \sinh b \sinh c \sinh d.
\end{multline}
The analogous calculation beginning with $x=\cos\delta=\cos\pa{\delta_1+\delta_2}$ yields:
\begin{multline}\label{raw_x}
	x\pa{E^2-1}\sinh c \sinh d \\
	=\pa{E\cosh d -\cosh a}\pa{E\cosh c -\cosh b}-yz\sinh a \sinh b \sinh c \sinh d.
\end{multline}
In each of the equations \eqref{raw_w} and \eqref{raw_x}, move all terms to the right hand side and regroup to obtain two quadratics in $E$:
\begin{align*}
	0&=A_1 E^2 + B_1 E +C_1 \tn{ and} \\
	0&=A_2 E^2 + B_2 E +C_2.
\end{align*}
Inspection shows that $B_1=B_2$. Define $B:=B_1=B_2$.\\

We claim that $A_1\neq0$ and $A_2\neq0$.
Suppose, by way of contradiction, that $A_1=0$.
Expanding the hyperbolics in $A_1$ into exponentials and collecting terms by the exponents of their exponentials yields:
\[
	-\frac{1}{4}\pa{w-1}e^{a+b} +\pa{\tn{sum of lower order terms}}=0.
\]
Lindemann's theorem implies $w=1$ which contradicts \eqref{wxyzE}. So, $A_1\neq0$ and similarly $A_2\neq0$.\\

The quadratic formula yields:
\[
0<E=\frac{-B\pm\sqrt{B^2-4 A_1 C_1}}{2A_1} = \frac{-B\pm\sqrt{B^2-4 A_2 C_2}}{2A_2}
\]
where the $\pm$ signs are ambiguous and not necessarily equal.
We have:
\[
-A_2 B \pm A_2 \sqrt{B^2-4 A_1 C_1} = -A_1 B \pm A_1 \sqrt{B^2-4 A_2 C_2}
\]
and so:
\[
A_1 B -A_2 B \pm A_2 \sqrt{B^2-4 A_1 C_1} = \pm A_1 \sqrt{B^2-4 A_2 C_2}.
\]
Squaring both sides yields:
\begin{multline*}
\pa{A_1 B -A_2 B}^2 + A_2^2\pa{B^2-4 A_1 C_1} \pm 2\pa{A_1 B -A_2 B}A_2\sqrt{B^2-4 A_1 C_1}\\
	=A_1^2\pa{B^2-4 A_2 C_2}.
\end{multline*}
Therefore:
\begin{multline*}
\pa{A_1 B -A_2 B}^2 + A_2^2\pa{B^2-4 A_1 C_1} -A_1^2\pa{B^2-4 A_2 C_2}\\
	=\mp 2\pa{A_1 B -A_2 B}A_2\sqrt{B^2-4 A_1 C_1}.
\end{multline*}
Squaring both sides yields:
\begin{multline}\label{near_R}
\br{\pa{A_1 B -A_2 B}^2 + A_2^2\pa{B^2-4 A_1 C_1} -A_1^2\pa{B^2-4 A_2 C_2}}^2\\
	=4\pa{A_1 B -A_2 B}^2A_2^2\pa{B^2-4 A_1 C_1}.
\end{multline}
Subtract the right hand side of \eqref{near_R} to the left side, expand out by multiplication, and cancel the common factor $16 A_1^2 A_2^2\ne0$ to obtain:
\begin{equation}\label{nearer}
A_1^2 C_2^2 +A_2^2 C_1^2 -2A_1 A_2 C_1 C_2 +A_1 B^2 C_1 +A_2 B^2 C_2 - A_1 B^2 C_2 - A_2 B^2 C_1 = 0.
\end{equation}
At this point, use of a computer algebra system is recommended; we used MAGMA for our computations and Mathematica for independent verification.
In \eqref{nearer}, expand hyperbolics into exponentials and then expand out by multiplication.
Multiply through the resulting equation by $4096$ and then collect together terms whose exponentials have identical exponents as elements of $\Z\br{a,b,c,d}$.
This yields:
\begin{multline}\label{R_max_order}
\pa{w-1}^2 y^2 z^2 e^{\pa{4a+4b+2c+2d}} + \pa{x-1}^2 y^2 z^2 e^{\pa{2a+2b+4c+4d}} \\
 +2\pa{w-1}\pa{1-x} y^2 z^2 e^{\pa{3a+3b+3c+3d}} + \pa{\tn{sum of lower order terms}}=0.
\end{multline}
Equation \eqref{R_max_order} is our relation.
Its left hand side, denoted by $R$, is the sum of $1041$ terms each of the form $p e^{ka+lb+mc+nd}$ for some $p\in\Z\br{w,x,y,z}$ and some integers $k$, $l$, $m$, and $n$ in $\br{-4,4}$.
Here, a \emph{lower order} term is defined to be one whose exponential exponent $ka+lb+mc+nd$ is dominated by $4a+4b+2c+2d$, $2a+2b+4c+4d$, or $3a+3b+3c+3d$.
This means, by definition, that either:
\begin{enumerate}
\item $k\leq4$, $l\leq4$, $m\leq2$, $n\leq2$, and at least one of these inequalities is strict,
\item $k\leq2$, $l\leq2$, $m\leq4$, $n\leq4$, and at least one of these inequalities is strict, or
\item $k\leq3$, $l\leq3$, $m\leq3$, $n\leq3$, and at least one of these inequalities is strict.
\end{enumerate}

There are now two cases to consider.
\item Case 1. One of the sums $4a+4b+2c+2d$, $2a+2b+4c+4d$, or $3a+3b+3c+3d$ is greater than the other two.
Then, Lindemann's theorem implies that the corresponding polynomial coefficient in equation~\eqref{R_max_order} must vanish.
That is:
\[
\pa{w-1}^2 y^2 z^2=0, \quad \pa{x-1}^2 y^2 z^2=0, \quad \tn{or} \quad 2\pa{w-1}\pa{1-x} y^2 z^2=0.
\]
By equations~\eqref{wxyzE}, we have $w\ne1$, $x\ne1$, $y\ne0$, and $z\ne0$.
This contradiction completes the proof of Case 1.

\item Case 2. Two of the sums $4a+4b+2c+2d$, $2a+2b+4c+4d$, and $3a+3b+3c+3d$ are equal.
Then, $a+b=c+d$.
Recalling Figure~\ref{hyp_quad}, $a+b=c+d$ and two applications of the triangle inequality imply that $D$ cannot lie inside triangle $ABC$.
Similarly, $B$ cannot lie inside triangle $ADC$.
This implies that diagonal $AC$ lies inside quadrilateral $ABCD$.
Now, repeat the entire argument of the proof of Theorem~\ref{quad_thm} using diagonal $AC$ in place of diagonal $BD$.
We either obtain a contradiction as in Case 1 or we further obtain $a+d=b+c$.
Taken together, $a+b=c+d$ and $a+d=b+c$ imply that $a=c$ and $b=d$.
So, opposite sides of quadrilateral $ABCD$ are congruent.
By SSS, triangles $ABD$ and $CDB$ are congruent. In particular, $\delta_1=\beta_2$.
So, $\beta_1+\delta_1=\beta\in\Q\pi$ and triangle $ABD$ contradicts Lemma~\ref{alg_legs}.
This completes the proof of Claim 2 and of Theorem~\ref{quad_thm}.
\end{proof}

\section{Concluding Remarks}

The results above have analogues for generalized hyperbolic triangles with one ideal vertex and for spherical triangles.

\begin{lemma}
Let $ABC$ be a generalized hyperbolic triangle with one ideal vertex $C$ as in Figure~\ref{ideal_vertex}.
If $\alpha,\beta\in\Q\pi$, then the finite side length $c$ is transcendental.
\end{lemma}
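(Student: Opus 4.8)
The plan is to mirror the proof of Lemma~\ref{3anglesrational} almost verbatim, once I produce a formula expressing $\cosh c$ in terms of the two finite angles $\alpha$ and $\beta$ alone. The natural source is the second hyperbolic law of cosines~\ref{hloc2}, specialized to the degenerate case in which the vertex $C$ is ideal. As $C$ recedes to the boundary of $\H^2$, the angle $\gamma$ at $C$ tends to $0$, so $\cos\gamma\to1$, and \ref{hloc2} becomes
\[
\cosh c = \frac{\cos\alpha\cos\beta + 1}{\sin\alpha\sin\beta}.
\]
First I would justify this identity for the generalized triangle, either by a direct computation in an explicit model (placing $C$ at an ideal point with $A$ and $B$ on the two asymptotic sides) or by a limiting argument: let the third vertex of an ordinary triangle $ABC_n$ tend to $C$ while holding $A$ and $B$, and hence the finite side $c=AB$, fixed. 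Then $\gamma\to0$ and $\cos\gamma\to1$ while both sides of \ref{hloc2} converge, yielding the displayed formula.

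Next I would verify the arithmetic hypotheses needed to invoke Lindemann. Since $C$ is ideal, the triangle has area $\pi-\alpha-\beta>0$, so $\alpha+\beta<\pi$; together with $\alpha,\beta\in(0,\pi)$ this forces $\sin\alpha\neq0$ and $\sin\beta\neq0$, so the denominator above does not vanish. A short check shows $\cos(\alpha+\beta)>-1$, whence $\cosh c>1$ and $c>0$. Because $\alpha,\beta\in\Q\pi$, each of $\cos\alpha$, $\cos\beta$, $\sin\alpha$, $\sin\beta$ is algebraic, and since $\A$ is a field the displayed formula gives $\cosh c\in\A$.

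Finally, exactly as in Lemma~\ref{3anglesrational}, I would write $\cosh c\cdot e^{0}=\frac{1}{2}e^{c}+\frac{1}{2}e^{-c}$ and apply Lindemann's theorem (Theorem~\ref{lindemann}): if $c$ were algebraic then $0$, $c$, and $-c$ would be pairwise distinct algebraic numbers (as $c>0$), yet the relation exhibits a nontrivial $\A$-linear dependence among $e^{0}$, $e^{c}$, $e^{-c}$, a contradiction. Hence $c$ is transcendental. The only genuinely new point, and thus the main obstacle, is the first step: confirming that \ref{hloc2} survives the passage to an ideal vertex. Once that degenerate law of cosines is in hand, the transcendence argument is identical to the nondegenerate case.
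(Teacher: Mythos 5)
Your proposal is correct and follows essentially the same route as the paper: both reduce to the identity $\cosh c=(1+\cos\alpha\cos\beta)/(\sin\alpha\sin\beta)$ and then apply Lindemann's theorem to $\frac{1}{2}e^{c}+\frac{1}{2}e^{-c}-(\cosh c)e^{0}=0$. The only difference is that the paper simply cites Ratcliffe for that identity, whereas you sketch a derivation of it; your sanity checks ($\alpha+\beta<\pi$, nonvanishing denominator, $c>0$) are all sound.
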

\begin{figure}[htbp!]
    \centerline{\includegraphics[scale=1.0]{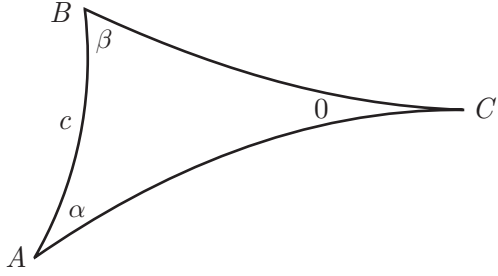}}
    \caption{Generalized hyperbolic triangle $ABC$ with one ideal vertex $C$.}
\label{ideal_vertex}
\end{figure}
\begin{proof}
By Ratcliffe~\cite[p.~88]{ratcliffe_2006}, we have:
\[
\cosh c =\frac{1+\cos\alpha \cos\beta}{\sin\alpha \sin\beta}.
\]
Thus:
\begin{equation}\label{ideal_rel}
\pa{\sin\alpha \sin\beta}e^c +\pa{\sin\alpha \sin\beta}e^{-c} = 2\pa{1+\cos\alpha \cos\beta}e^0.
\end{equation}
As $c>0$, Lindemann's theorem applied to~\eqref{ideal_rel} implies $c\notin\A$.
\end{proof}

Let $S^2\subset\R^3$ denote the unit sphere of constant Gaussian curvature $K=+1$.

\begin{lemma}
Let $ABC\subset S^2$ be a rational angled triangle labelled as in Figure~\ref{sph_triangle}.
Then, every side length of $ABC$ is transcendental. 
\end{lemma}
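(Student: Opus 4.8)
The plan is to mirror the proof of Lemma~\ref{3anglesrational}, replacing the second hyperbolic law of cosines \ref{hloc2} by its spherical counterpart and $\cosh$ by $\cos$. The spherical law of cosines for \emph{angles} (the dual, or polar, cosine rule; see Ratcliffe~\cite{ratcliffe_2006}) reads
\[
\cos c = \frac{\cos\gamma + \cos\alpha\cos\beta}{\sin\alpha\sin\beta},
\]
with analogous formulas for $a$ and $b$ obtained by permuting the labels. Since $\alpha,\beta,\gamma\in\Q\pi$, each side of a spherical triangle lies in $(0,\pi)$, so $\sin\alpha\sin\beta\neq0$ and the numerator and denominator above are algebraic; hence the first step is simply to conclude that $\cos c\in\A$.

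The second step is to record Euler's relation in the form
\[
\pa{\cos c}\,e^{0} - \tfrac{1}{2}\,e^{ic} - \tfrac{1}{2}\,e^{-ic} = 0.
\]
This is the spherical analogue of \eqref{xcoshc}, the only essential difference being that the exponents are now the purely imaginary numbers $\pm ic$ rather than the real numbers $\pm c$.

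For the third step, suppose by way of contradiction that $c\in\A$. Then $ic\in\A$ as well, and because $c\in(0,\pi)$ we have $c\neq0$, so the three exponents $0$, $ic$, and $-ic$ are pairwise distinct algebraic numbers. The coefficients $\cos c$, $-\tfrac12$, and $-\tfrac12$ are algebraic and not all zero, so the displayed relation exhibits a nontrivial $\A$-linear dependence among $e^{0}$, $e^{ic}$, and $e^{-ic}$, contradicting Lindemann's theorem (Theorem~\ref{lindemann}). Therefore $c\notin\A$, and the proofs for $a$ and $b$ are identical after permuting labels.

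I do not expect a serious obstacle, since the argument is essentially forced by the hyperbolic template. The one point demanding care is the passage to the complex exponential: because the spherical rule produces $\cos c$ rather than $\cosh c$, the relevant exponents become $\pm ic$, and one must note that $i\in\A$ forces $ic\in\A$ and that $c\neq0$ keeps the three exponents distinct, so that Lindemann's theorem genuinely applies. Selecting the correct cosine rule---the law of cosines for angles, which expresses a side purely in terms of the three angles, rather than the law of cosines for sides---is the only other place where attention is needed.
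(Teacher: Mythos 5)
Your proposal is correct and is essentially the paper's own argument: both use the second spherical law of cosines (the angle form) to produce an $\A$-linear relation among $e^{0}$, $e^{ic}$, $e^{-ic}$ with a nonzero coefficient on $e^{\pm ic}$, and then invoke Lindemann's theorem exactly as in the hyperbolic Lemma~\ref{3anglesrational}. (One trivial slip: the nonvanishing of $\sin\alpha\sin\beta$ comes from the \emph{angles} lying in $(0,\pi)$, not the sides, but this does not affect the argument.)
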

\begin{figure}[htbp!]
    \centerline{\includegraphics[scale=1.0]{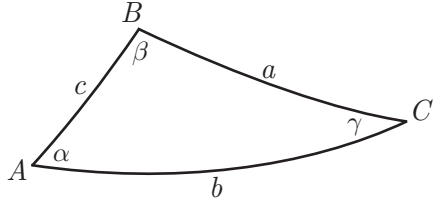}}
    \caption{Spherical triangle $ABC$.}
\label{sph_triangle}
\end{figure}

\begin{proof}
The second spherical law of cosines (see Ratcliffe~\cite[p.~49]{ratcliffe_2006}) gives:
\begin{equation}\label{spherical_rel}
\pa{\sin\alpha \sin\beta}e^{ic} + \pa{\sin\alpha \sin\beta}e^{-ic} = 2\pa{\cos\alpha \cos\beta + \cos\gamma}e^0
\end{equation}
where $c>0$ and $\sin\alpha \sin\beta\in\A-\cpa{0}$.
Whether or not the algebraic number $\cos\alpha \cos\beta + \cos\gamma$ vanishes, Lindemann's theorem applied to~\eqref{spherical_rel} implies $c\notin\A$.
The proofs for $a$ and $b$ are similar.
\end{proof}

On the other hand, the results in this note do not hold for arbitrary constant curvature.
In a plane of constant Gaussian curvature:
\[
K=-\pa{\cosh^{-1}\pa{1+\sqrt{2}}}^2 \approx -2.3365\ldots,
\]
a regular triangle with angles of radian measure $\pi/4$ has unit side lengths.
Similarly, in a sphere of constant Gaussian curvature:
\[
K=\frac{\pi^2}{4}\approx 2.4674 \ldots,
\]
a regular right angled triangle has unit side lengths.


\begin{thebibliography}{99}

\bibitem[Ail71]{ailles}
	D. S. Ailles,
	\emph{Triangles and trigonometry},
	Mathematics Teacher \textbf{64} (1971), 562.

\bibitem[Cal09]{gaussian}
	Jack S. Calcut,
	\emph{Gaussian integers and arctangent identities for $\pi$},
	Amer. Math. Monthly \textbf{116} (2009), 515--530.

\bibitem[Cal10]{gst}
	------,
	\emph{Grade school triangles},
	Amer. Math. Monthly \textbf{117} (2010), 673--685.

\bibitem[CG06]{CG06}
	J.~H.~Conway and R.~K.~Guy,
	\emph{The Book of Numbers}
	Copernicus, New York, 2006.

\bibitem[LL00]{ll}
	T. Y. Lam and K. H. Leung,
	\emph{On vanishing sums of roots of unity},
	J. Algebra \textbf{224} (2000), 91--109.

\bibitem[Man65]{mann}
	Henry B. Mann,
	\emph{On linear relations between roots of unity},
	Mathematika \textbf{12} (1965), 107--117.

\bibitem[Niv56]{niven_1956}
	Ivan Niven,
	\emph{Irrational numbers},
	The Mathematical Association of America,
	John Wiley and Sons, New York, 1956.

\bibitem[PAR82]{par}
	J. C. Parnami, M. K. Agrawal, and A. R. Rajwade,
	\emph{Triangles and cyclic quadrilaterals, with angles that are rational
   multiples of $\pi$ and sides at most quadratic over the rationals},
   Math. Student \textbf{50} (1982), 79--93.


\bibitem[Rat06]{ratcliffe_2006}
	John G. Ratcliffe,
	\emph{Foundations of hyperbolic manifolds,} second edition,
	Springer, New York, 2006.
	
\bibitem[Sco64]{schoenberg}
	I. J. Schoenberg,
	\emph{A note on the cyclotomic polynomial},
	Mathematika \textbf{11} (1964), 131--136.

\end{thebibliography}
\end{document}